\documentclass{amsart}

\usepackage[utf8]{inputenc}
\usepackage{xcolor}
\usepackage{todonotes}
\usepackage{amssymb}
\usepackage{nicefrac}
\usepackage{enumerate}
\usepackage{natbib}

\usepackage{hyperref}
\hypersetup{
    colorlinks=true,
    linkcolor=teal,
    citecolor=magenta,
    }
\usepackage[nameinlink, capitalize, noabbrev]{cleveref}

\newtheorem{Theorem}{Theorem}

\newtheorem{proposition}{Proposition}[section]
\newtheorem{lemma}[proposition]{Lemma}
\newtheorem{corollary}[proposition]{Corollary}
\newtheorem{theorem}[proposition]{Theorem}

\theoremstyle{definition}
\newtheorem{example}[proposition]{Example}
\newtheorem{remark}[proposition]{Remark}

\DeclareMathOperator\Aut{Aut}

\newcommand*\acts{\curvearrowright}
\newcommand*\abs[1]{\lvert#1\rvert}

\newcommand*\gen[1]{\langle#1\rangle}

\newcommand*\setst[2]{\{#1\,|\,#2\}}


\numberwithin{equation}{section}

\title{Non-closed subgroups of weakly branch groups}
\author{Jorge Fariña-Asategui, Paul-Henry Leemann, Tatiana Nagnibeda}
\address{Jorge Fariña-Asategui: Centre for Mathematical Sciences, Lund University, 223 62 Lund, Sweden -- Department of Mathematics, University of the Basque Country UPV/EHU, 48080 Bilbao, Spain}
\email{jorge.farina\_asategui@math.lu.se}
\address{Paul-Henry Leemann: Department of Pure Mathematics, Xi'an Jiaotong-Liverpool University, 215123 Suzhou, China}
\email{paulhenry.leemann@xjtlu.edu.cn}
\address{Tatiana Nagnibeda: Section de math\'ematiques, Universit\'e de Gen\`eve, 7-9 rue G\'en\'eral-Dufour, 1204 Gen\`eve, Switzerland}
\email{tatiana.smirnova-nagnibeda@unige.ch}
\keywords{Profinite topology, congruence topology, ERF property, automorphisms of rooted trees, (weakly) branch groups, micro-supported actions}
\subjclass[2020]{Primary: 20E08, 20E26; Secondary: 20E18, 20E07}
\thanks{The first author is supported by the Spanish Government, grant PID2020-117281GB-I00, partly with FEDER funds and by the Walter Gyllenberg Foundation from the Royal Physiographic Society of Lund. The authors gratefully acknowledge support from the Swiss NSF grant 200020-200400. The second author is supported by RDF-23-01-045-Groups acting on rooted trees and their subgroups of XJTLU}

\begin{document}

\begin{abstract}

For a weakly branch group $G$ acting on a regular enough rooted tree, we provide two constructions of continuous families of distinct subgroups that are not closed in the profinite topology on $G$. On the one hand, we construct a continuous family of distinct non-closed subgroups such that each $H$ in the family is not ERF, that is, contains subgroups not closed in the profinite topology on $H$. On the other hand, under an additional assumption on $G$, we construct a continuous family of ERF subgroups which are not closed in the congruence (and in the profinite) topology on $G$.  

\end{abstract}
\maketitle

\section{introduction}
\label{section: introduction}

Given a group, one can endow it with the so-called \emph{profinite topology},  where finite-index subgroups form a basis of open neighbourhoods of the identity. This is a natural topology to consider on a group. For example, groups for which the trivial subgroup is closed in the profinite topology are exactly the  \emph{residually finite groups}, i.e., groups that can be approximated by their finite quotients, in the sense that every non-trivial group element of $G$ remains non-trivial in some finite quotient of $G$. A more restrictive natural condition requires that all finitely generated subgroups be closed in the profinite topology; such groups are called \emph{subgroup separable} or \emph{LERF} (short for \emph{locally extended residually finite}). By a celebrated theorem of Marshall Hall, free groups are LERF \cite{Hall}. Further examples of LERF groups notably include all limit groups \cite{Wilton} and the first Grigorchuk group \cite{GW}.

An even stronger notion is the so-called \emph{ERF} (short for \emph{extended residually finite}) property, which is satisfied for a group $G$ if all its subgroups  are closed in the profinite topology. It was introduced in 1958 by Mal'cev, who proved that all  virtually polycyclic groups are ERF \cite{Mal}. The ERF property has been completely characterized in certain classes of groups, such as abelian or nilpotent groups; see \cite{RRV} and the references therein. Virtually polycyclic groups remain the only known ERF examples among finitely generated groups.

In this paper, we will restrict our attention  to residually finite groups, or equivalently, to subgroups of automorphisms of   spherically homogeneous rooted trees. Of particular interest to us are branch groups, a class of residually finite groups with rich subnormal subgroup structure similar to that of the full group of automorphisms of the tree \cite{GrigNH,Wilson}. The class of branch groups contains examples of groups of intermediate growth, of amenable but not elementary amenable groups, of infinite finitely generated torsion groups, of self-similar groups. Further interesting examples belong to the class of weakly branch groups.

Grigorchuk and Wilson proved in \cite{GW} that the first Grigorchuk group is LERF. This result was extended to the Gupta-Sidki $3$-group by Garrido \cite{Gar16} and further generalized by Francoeur and  the second author to branch groups satisfying the so-called subgroup induction property  \cite{FL}. This is a strong property which implies in particular that the group is torsion. These groups also satisfy a remarkable property \emph{(MF)} that all of its maximal subgroups are of finite index, first shown by Pervova \cite{Pervova1, Pervova2}. All their weakly maximal\footnote{A subgroup is weakly maximal is it is maximal among subgroups of infinite index.} subgroups are closed in the profinite topology too.

Observe that a group that is ERF is necessarily \emph{(MF)}, as a maximal subgroup of infinite index must be dense in the profinite topology. The property \emph{(MF)} has been shown to hold for many (weakly) branch groups, and one can wonder whether examples of ERF groups can be found among them. Cornulier pointed out in \cite[Example~3.10(8)]{Cornulier} that the first Grigorchuk group, a famous example of a branch group acting on the binary tree, contains a non-ERF subgroup and thus, it is itself non-ERF. 
In this note we  show that weakly branch groups are non-ERF by providing two constructions of countable families of pairwise distinct subgroups which are not closed in profinite topolgy. The first one is Cornulier's example extended to all weakly branch groups (and in fact more generally, to groups with micro-supported actions). Our second result is a sufficient condition for a group acting by automorphisms on a spherically homogeneous rooted tree to contain a continuum of subgroups that are not close in the congruence topology, despite being ERF.  Before formulating our results, let us introduce some notation and terminology.

Consider a sequence $\mathbf{m}:=\{m_n\}_{n\ge 1}$, where $m_n\ge 2$ for every $n\ge 1$. The corresponding spherically homogeneous rooted tree $T_\mathbf{m}$ has  $m_n$  immediate descendants in each vertex at level $n-1$. If $m_n=m$ for all $n\geq 1$, we write $T_m$ for the corresponding regular rooted tree. 
For  a vertex $v\in T_\mathbf{m}$, we denote by $T_\mathbf{m}^v$  the subtree rooted at $v$.
Given a group $G \le \mathrm{Aut}~T_\mathbf{m}$, we write $\mathrm{st}_G(v)$ for  the stabilizer of a vertex $v\in T_\mathbf{m}$ in~$G$ and $\mathrm{st}_G(n)$ for the (pointwise) stabilizer of the $n$-th level of $T_\mathbf{m}$ in $G$.
We also consider $\mathrm{rist}_G(v)$, the \emph{rigid stabilizer} of $v$, that is, the subgroup of $G$ consisting of the elements in $G$ which act trivially on $T_\mathbf{m}\setminus T_\mathbf{m}^v$. Note that  $[\mathrm{rist}_G(u),\mathrm{rist}_G(v)]=1$ if $u$ and $v$ are \textit{incomparable}, that is, $u\ne v$ and none is a descendant of the other. For $n\ge 1$, the direct product $\mathrm{rist}_G(n):=\prod \mathrm{rist}_G(v)\le G$, where the product ranges over all the vertices at the $n$th level of $T_\mathbf{m}$, is a normal subgroup of $G$ called the  \emph{$n$th rigid level stabilizer} of $G$. 
 
 A group $G\le \mathrm{Aut}~T_\mathbf{m}$ that acts transitively on all levels of the tree $T_\mathbf{m}$ is \emph{weakly branch} if $\mathrm{rist}_G(n)$ is non-trivial for every $n\ge 1$, and \emph{branch} if $\mathrm{rist}_G(n)$ is of finite index in~$G$ for every $n\ge 1$.

\begin{Theorem}
    \label{Theorem: weakly branch are not erf}
    Let $T_\mathbf{m}$ be a spherically homogeneous rooted tree such that\footnote{In particular, \cref{Theorem: weakly branch are not erf} applies to all weakly branch groups that act on  regular rooted trees, which are the most-studied weakly branch groups in the literature.}   there are only finitely many primes dividing the terms in the sequence $\mathbf{m}$. Let $G\le\mathrm{Aut}~T_\mathbf{m}$ be a weakly branch group. Then $G$ contains a continuum of pairwise distinct non-closed  subgroups that are not ERF. In particular $G$ is not ERF.
\end{Theorem}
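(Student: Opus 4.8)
The plan is to realize, inside $G$, a large abelian $p$-subgroup with unbounded element orders and then to exhibit explicit subgroups of it that are simultaneously non-closed in $G$ and intrinsically non-ERF. First I fix a prime: since only finitely many primes divide the terms of $\mathbf{m}$, by the pigeonhole principle there is a single prime $p$ with $p\mid m_n$ for infinitely many $n$. The heart of the preparation is to produce, for every $e$, a vertex $w$ and an element of $\mathrm{rist}_G(w)$ of order divisible by $p^{e}$. Granting this, I choose pairwise incomparable vertices $w_0,w_1,w_2,\dots$ (e.g.\ off-ray children at increasing depths) together with elements $g_n\in\mathrm{rist}_G(w_n)$ whose orders are powers of $p$ tending to infinity; after replacing each $g_n$ by a suitable power I may assume $g_n$ has order exactly $p^{e_n}$ with $e_n\to\infty$. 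Because the $w_n$ are pairwise incomparable, the $g_n$ commute and have disjoint supports, so they generate the internal direct sum $A:=\bigoplus_{n}\langle g_n\rangle\cong\bigoplus_n \Z/p^{e_n}\Z\le G$.

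Next I work entirely inside $A$, written additively. Partition the index set as $\{0\}\sqcup I\sqcup J$ with $I,J$ infinite, and set $c:=p^{e_0-1}g_0$ and $d_i:=p^{e_i-1}g_i$ for $i\in I$, all of order $p$. Let $H_1:=\langle d_i+c : i\in I\rangle$, and for an infinite subset $S\subseteq J$ put $H_2^{(S)}:=\langle g_j : j\in S\rangle$ and $H^{(S)}:=H_1\oplus H_2^{(S)}$ (the supports $\{0\}\cup I$ and $J$ are disjoint, so this is a genuine direct sum). The subgroup $H^{(S)}$ is \emph{non-ERF}: it contains $H_2^{(S)}\cong\bigoplus_{j\in S}\Z/p^{e_j}\Z$, an abelian $p$-group of unbounded exponent, and such a group always contains a non-closed subgroup — namely a socle diagonal $\langle p^{e_j-1}g_j+ p^{e_{j_0}-1}g_{j_0}\rangle$, whose closure contains $p^{e_{j_0}-1}g_{j_0}$ because in any finite quotient of exponent $p^f$ the terms $p^{e_j-1}g_j$ vanish as soon as $e_j-1\ge f$. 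The same mechanism shows $H^{(S)}$ is \emph{non-closed in $A$}: one checks $c\notin H^{(S)}$ (the only element of $H^{(S)}$ supported on coordinate $0$ is trivial), while $c$ lies in the $A$-closure of $H_1$, hence of $H^{(S)}$, since any finite-index $V\le A$ containing $H_1$ contains every $d_i+c$, forcing $c\equiv -d_i\equiv 0$ in $A/V$ once $e_i-1$ exceeds the exponent of $A/V$.

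To upgrade ``non-closed in $A$'' to ``non-closed in $G$'' I use a transfer argument: if $U\le G$ has finite index and $H^{(S)}\le U$, then $U\cap A$ has finite index in $A$ and contains $H^{(S)}$, so by the previous paragraph $c\in U\cap A\le U$; as this holds for all such $U$ we get $c\in\overline{H^{(S)}}^{\,G}\setminus H^{(S)}$. Finally, for distinct infinite $S,S'\subseteq J$ one recovers $S$ from $H^{(S)}$ as the set of $j\in J$ on whose subtree $T_\mathbf{m}^{w_j}$ the group $H^{(S)}$ acts nontrivially, so the $H^{(S)}$ are pairwise distinct; since $J$ is countable there are continuum-many infinite $S$. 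This yields a continuum of pairwise distinct subgroups of $G$, each non-ERF and non-closed in the profinite topology of $G$, and in particular shows $G$ is not ERF.

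The main obstacle is the input assumed at the start, namely producing elements of \emph{unbounded} $p$-power order inside rigid stabilizers — the abelian construction collapses if $G$ had bounded exponent, since a bounded-exponent abelian group is ERF. I expect to obtain these from the interplay of the two defining features of weak branching. Level-transitivity forces $\mathrm{st}_G(v)$ to act transitively on the children of each vertex $v$, hence (whenever $p$ divides the relevant $m_n$) to contain an element inducing a nontrivial $p$-element on those children; weak branching then lets me localize such actions inside disjoint subtrees via the rigid stabilizers. Stacking these localized $p$-permutations along a nested chain of vertices (as in the construction of truncated adding-machine elements) should amplify the order to an arbitrarily high power of $p$, and conjugating by level-transitivity distributes copies to an antichain of incomparable vertices, furnishing the $g_n$ above.
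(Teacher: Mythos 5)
Your abelian machinery is essentially correct once its input is granted: if one has commuting elements $g_n$ of order $p^{e_n}$, $e_n\to\infty$, lying in rigid stabilizers of pairwise incomparable vertices, then your subgroups $H^{(S)}=H_1\oplus H_2^{(S)}$, the finite-quotient computations showing $c\in\overline{H^{(S)}}\setminus H^{(S)}$, the transfer of non-closedness from $A$ to $G$ via $U\cap A$, and the recovery of $S$ from the supports all hold up. This part is a genuine (and somewhat more hands-on) variant of what the paper does in its torsion case, where instead of a shifted socle plus a full direct summand, the non-closed non-ERF subgroup is taken to be the kernel of the projection of $\bigoplus_i \mathbb{Z}/p^{n_i}\mathbb{Z}$ onto the Prüfer group $\mathbb{Z}_{p^\infty}$, non-closedness following from the non-residual-finiteness of $\mathbb{Z}_{p^\infty}$.

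The gap is exactly the input you postpone to your final paragraph, and it cannot be filled the way you suggest, because it is false in general: a weakly branch group need not contain \emph{any} nontrivial torsion element. The Basilica group of Grigorchuk and \.Zuk is a torsion-free weakly branch group acting on the binary tree, so all of its rigid stabilizers are torsion-free and there is no element of order $p^e$ for any prime $p$ and any $e\ge 1$; your construction, built entirely out of $p$-power torsion, then has nothing to run on (as you yourself note, bounded-exponent or torsion-free abelian groups of finite rank would not do). Your proposed derivation also fails on its own terms: level-transitivity does produce elements of $\mathrm{st}_G(v)$ inducing nontrivial $p$-permutations of the children of $v$, but such an element may have infinite order, and it need not lie in $\mathrm{rist}_G(v)$; weak branching only guarantees $\mathrm{rist}_G(v)\neq 1$ and gives no control on how $\mathrm{rist}_G(v)$ acts near the top of its subtree (it may fix arbitrarily many levels below $v$), so the ``localization'' step has no justification. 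Moreover, your prime is chosen from the sequence $\mathbf{m}$ before looking at the group, whereas any usable prime must be extracted from the torsion actually present in $G$. The paper circumvents both problems: \cref{Lem:CyclicSub} produces elements of arbitrarily large order in every $\mathrm{rist}_G(U)$ by the nested stacking you allude to, but \emph{without} attempting to control the orders to be powers of a fixed prime — the prime is obtained only afterwards, by pigeonhole among the finitely many primes that can divide torsion orders, and only in the case where rigid stabilizers do contain torsion; and \cref{theorem: main result for micro supported actions} treats the remaining case separately, taking the $H_i$ infinite cyclic, mapping $\bigoplus_i\mathbb{Z}$ onto $\mathbb{Z}_{2^\infty}$ (through $\bigoplus_i\mathbb{Z}/2^i\mathbb{Z}$), and using the kernel as the non-closed, non-ERF subgroup. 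Your proof has no analogue of this second case, and without it the theorem is not proved.
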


In fact, the construction in \cref{Theorem: weakly branch are not erf} works in the more general context of groups with micro-supported actions, see \cref{section: non-erf subgroups}.

For $G\le \mathrm{Aut}~T_\mathbf{m}$, the \emph{congruence topology} is defined by a basis of open neighbourhoods of the identity formed by the level stabilizers  $\{\mathrm{st}_{G}(n)\}_{n\ge 1}$. 
Every such stabilizer is of finite index in $G$, hence the congruence topology is coarser than the profinite topology. 
In some cases, they coincide, and  then $G$ is said to satisfy the \emph{congruence subgroup property}.

\begin{Theorem}
\label{Theorem: uncountably many subgroups}
    Let $G\le \mathrm{Aut}~T_{\mathbf{m}}$.  Suppose that  there exists a prime $p\ge 2$ such that for every $v\in T_\mathbf{m}$, the subgroup $\mathrm{rist}_G(v)$ has an element of order divisible by~$p$.
    Then~$G$ contains a continuum of pairwise distinct ERF subgroups which are non-closed in the congruence topology.
\end{Theorem}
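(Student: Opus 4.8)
The plan is to realize all the required subgroups inside a single explicit elementary abelian $p$-subgroup of $G$, constructed from rigid stabilizers along one branch of the tree, and then to exploit the fact that an $\mathbb{F}_p$-vector space is automatically ERF. First I would fix a ray $r_0, r_1, r_2, \dots$ in $T_\mathbf{m}$ (with $r_0$ the root and $r_{n+1}$ a child of $r_n$) and, using $m_{n+1}\ge 2$, choose for each $n\ge 0$ a vertex $v_n$ that is a child of $r_n$ distinct from $r_{n+1}$. A direct check shows the $v_n$ are pairwise incomparable and that $\mathrm{level}(v_n)=n+1\to\infty$. By hypothesis each $\mathrm{rist}_G(v_n)$ contains an element of order divisible by $p$; raising it to a suitable power yields $g_n\in\mathrm{rist}_G(v_n)$ of order exactly $p$. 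Since $\mathrm{rist}_G(v_n)\le\mathrm{st}_G(\mathrm{level}(v_n))$, we get $g_n\in\mathrm{st}_G(n+1)$, so that the sequence $(g_n)$ tends to the identity in the congruence topology. Note this step uses only the hypothesis on rigid stabilizers together with $m_n\ge 2$, not weak branchness or level-transitivity.

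Because the supports of the $g_n$ are pairwise disjoint (incomparable vertices), these elements pairwise commute, each has order $p$, and they are $\mathbb{F}_p$-independent (restricting a trivial finite product to a single $T_\mathbf{m}^{v_j}$ kills all factors but $g_j$). Hence $V:=\langle g_n : n\ge 0\rangle\le G$ is an elementary abelian $p$-group of countably infinite rank, and I would fix the identification of $V$ with the $\mathbb{F}_p$-vector space with basis $\{e_n\}$, $e_n\leftrightarrow g_n$, writing things additively.

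For every infinite subset $A=\{a_1<a_2<\cdots\}\subseteq\mathbb{N}$ I would set
\[
H_A:=\langle g_{a_j}g_{a_{j+1}}^{-1} : j\ge 1\rangle\le V,
\]
which is the subspace $\{\sum_j c_j e_{a_j} : \sum_j c_j=0 \text{ in }\mathbb{F}_p\}$ supported on $A$. The key point is that $H_A$ is not closed in the congruence topology, with witness $g_{a_1}$: telescoping (all factors commute) gives $g_{a_1}g_{a_k}^{-1}=\prod_{j=1}^{k-1}g_{a_j}g_{a_{j+1}}^{-1}\in H_A$ for every $k$, while $g_{a_k}\in\mathrm{st}_G(\mathrm{level}(v_{a_k}))$ with $\mathrm{level}(v_{a_k})=a_k+1\to\infty$. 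Thus for each $n$ one has $g_{a_1}=(g_{a_1}g_{a_k}^{-1})\,g_{a_k}\in H_A\cdot\mathrm{st}_G(n)$ as soon as $a_k+1\ge n$, so $g_{a_1}\in\overline{H_A}=\bigcap_n H_A\,\mathrm{st}_G(n)$, whereas $g_{a_1}\notin H_A$ because its coefficient sum equals $1\ne 0$. Pairwise distinctness is immediate, since the union of the supports of the elements of $H_A$ is exactly $A$; as there are continuum many infinite subsets of $\mathbb{N}$, this yields a continuum of pairwise distinct subgroups.

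It remains to verify that each $H_A$ is ERF, which is soft: $H_A$ is itself an elementary abelian $p$-group, i.e.\ an $\mathbb{F}_p$-vector space, and every such group is ERF. Indeed, given a subspace $W\le H_A$ and $v\in H_A\setminus W$, one chooses a linear functional $\phi:H_A\to\mathbb{F}_p$ vanishing on $W$ but not on $v$ (extend a basis of $H_A/W$ and read off a coordinate), so that $\ker\phi\supseteq W$ is a finite-index, hence profinitely closed, subgroup of $H_A$ avoiding $v$; thus $W$ is closed in the profinite topology of $H_A$. The main obstacle is conceptual rather than computational: the witness must be a genuine element of $G$. A naive attempt would use an infinite product $\prod_n g_n$, which need not lie in $G$; the telescoping family $H_A$ is chosen precisely so that the honest element $g_{a_1}\in G$ plays the role of the congruence limit. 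I would therefore organize the write-up around the lemma that elementary abelian $p$-groups are ERF together with the explicit congruence-closure computation for $H_A$.
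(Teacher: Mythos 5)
Your proof is correct, and it takes a genuinely different — and more elementary — route than the paper's. Both arguments rest on the same underlying mechanism: order-$p$ elements $h\in\mathrm{rist}_G(v)$ at deeper and deeper pairwise incomparable vertices converge to the identity in the congruence topology, so a subgroup generated by suitable combinations of them acquires an extra element in its congruence closure. The difference lies in which combinations are taken. The paper fixes a global element $a\in G$ of order $p$, finds a level on which $\langle a\rangle$ has an orbit $\{x,x^a,\dots,x^{a^{p-1}}\}$ of length $p$, and sets $H_{\mathcal V}:=\langle ah_v \mid v\in V_x\rangle$ for an infinite set $V_x$ of incomparable vertices below $x$; the missing limit point is $a$ itself, via $a=(ah_v)h_v^{-1}\in H_{\mathcal V}\,\mathrm{st}_G(n)$. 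Since these $H_{\mathcal V}$ are not abelian, excluding $a$ from $H_{\mathcal V}$ requires a counting argument (a word in the generators equal to $a$ must have length $r\equiv 1 \bmod p$, whence a product of commuting nontrivial order-$p$ elements would have to vanish, which is impossible), and ERF is deduced by embedding the finite-index subgroup $\mathrm{st}_{H_{\mathcal V}}(1)$ into $\bigoplus_{v\in\mathcal V}\mathbb{Z}/p\mathbb{Z}$ and invoking that ERF passes to finite-index overgroups. You instead remain inside a single elementary abelian $p$-group $\bigoplus_{n}\langle g_n\rangle$ and take the coefficient-sum-zero subspaces $H_A$ supported on infinite subsets $A$: the telescoping identity produces the witness $g_{a_1}$, a one-line linear functional computation replaces the mod-$p$ counting, and ERF is immediate because each $H_A$ is itself an $\mathbb F_p$-vector space (equivalently, the case $I=\emptyset$ with uniformly bounded orders of \cref{Cor:SumERF}); your distinctness argument via supports is the same as the paper's binary-sequence argument. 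What your version buys is brevity and softness — no orbit argument, no order-$p^2$ computation, no counting, and no appeal to finite-extension inheritance of ERF; what the paper's version exhibits additionally is that the congruence closure can be made to capture a prescribed element $a$ that acts nontrivially on a fixed level (rather than a generator-type element already deep in a level stabilizer), and its blueprint runs parallel to the continuum construction of \cref{theorem: main result for micro supported actions}. For the statement as given, both arguments are complete.
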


Note that the assumption in \cref{Theorem: weakly branch are not erf} of only finitely many primes dividing the terms in the sequence $\mathbf{m}$, together with every rigid vertex stabilizer containing torsion elements, imply the assumption of \cref{Theorem: uncountably many subgroups}.

The assumption in \cref{Theorem: uncountably many subgroups} is satisfied by many well-known (weakly) branch groups in the literature; we mention some of them in the end of the paper. The proof of \cref{Theorem: uncountably many subgroups} is constructive and yields explicit examples of subgroups which are not closed in both the congruence topology and the profinite topology.

The paper is organized as follows. In \cref{section: non-erf subgroups}, we recall what is known on the ERF property for abelian groups and then prove \cref{Theorem: weakly branch are not erf}. In \cref{section: main proofs}, we prove \cref{Theorem: uncountably many subgroups} and show that it applies to some well-known examples of groups of automorphisms of regular rooted trees.

\subsection*{Acknowledgements} The first and second authors would like to thank the Université de Genève for its warm hospitality while this work was carried out.

\section{Non-ERF subgroups of groups with micro-supported actions}
\label{section: non-erf subgroups}

The goal of this section is to establish \cref{Theorem: weakly branch are not erf}. In fact, we will prove this result in a more general context  of groups with micro-supported actions; see \cref{theorem: main result for micro supported actions} below.

\subsection{Groups with micro-supported actions}

 Let us consider a group $G$ acting by homeomorphisms on a topological space $X$. For any non-empty open subset $U\subseteq X$, the pointwise fixator of the complement $X\setminus U$ is called the \emph{rigid stabilizer} of $U$, and we denote it by $\mathrm{rist}_G(U)$. We say that the action of $G$ on $X$ by homeomorphisms is a \emph{micro-supported action} if $\mathrm{rist}_G(U)\ne 1$ for every non-empty open subset $U\subseteq X$.

If $G$ acts on a spherically homogeneous rooted tree $T_\mathbf{m}$, then its action on $T_\mathbf{m}$ induces an action on the boundary $\partial T_\mathbf{m}$. The induced action on the boundary $\partial T_\mathbf{m}$ is minimal if and only if the action on $T_\mathbf{m}$ is level-transitive, while the action on $\partial T_\mathbf{m}$ is micro-supported if and only if the $\mathrm{rist}_G(n)$ are infinite. Hence $G$ is weakly branch if and only if its action on $\partial T_\mathbf{m}$ is both minimal and micro-supported, and weakly branch groups are therefore examples of groups with micro-supported actions. In general, a micro-supported action doesn't have to come from an action on a spherically homogeneous rooted tree. In particular, there are groups with micro-supported actions that are not residually finite (and hence obviously non-ERF), for example, Thompson groups.

\subsection{ERF property for Abelian groups}

Abelian groups and, more concretely, direct sums of cyclic groups, play a central role in the proof of \cref{Theorem: weakly branch are not erf}. Hence, we begin by reviewing the ERF property for this class of groups. 

Not all abelian groups are ERF. In fact, ERF groups are residually finite, so non-residually finite abelian groups like the Prüfer groups $\mathbb{Z}_{p^\infty}$ cannot be ERF. Since the ERF property is preserved by taking factors of subnormal subgroups, an ERF abelian group cannot have a subnormal subgroup with a factor isomorphic to $\mathbb{Z}_{p^\infty}$ for any prime $p$.
Following the terminology in \cite{RRV}, we call such groups \emph{Prüfer-free}. In general, we have the following characterisation of abelian ERF groups due to Robinson, Russo and Vincenzi (note that for an abelian group $A$, we write $\tau(A)$ for the torsion subgroup of $A$ and $A_p$ for the $p$-component of $A$ for a prime $p$):

\begin{theorem}[{see {\cite[Proposition 3.1]{RRV}}}]
\label{Thm:RRV}
Let $A$ be an abelian group. Then $A$ is ERF if and only if the following two conditions hold:
\begin{enumerate}[\normalfont(i)]
    \item $A/\tau(A)$ is torsion-free, of finite Prüfer rank and Prüfer-free;
    \item $A_p$ has finite exponent for all primes $p$.
\end{enumerate}
\end{theorem}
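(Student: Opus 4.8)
The plan is to reduce the ERF property to a statement about quotients, and then to the absence of Prüfer subquotients. First I would record the standard observation that, for any group, a subgroup $H$ is closed in the profinite topology if and only if $G/H$ is residually finite, since the closure of $H$ is exactly the preimage of the finite residual of $G/H$. Hence for abelian $A$ the ERF property is equivalent to the assertion that \emph{every} quotient of $A$ is residually finite. Next I would establish the elementary criterion that an abelian group $B$ is residually finite if and only if $\bigcap_{m\ge 1} mB=0$: the inclusion $mB\subseteq K$ for every subgroup $K$ of index dividing $m$ shows $\bigcap_m mB$ lies in the finite residual, while conversely $B/mB$ is bounded, hence a direct sum of finite cyclic groups by Prüfer's theorem and so residually finite, which separates any element lying outside $mB$.

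The heart of the argument is the following lemma: \emph{a non-residually-finite abelian group $B$ has a subquotient isomorphic to $\mathbb{Z}_{p^\infty}$ for some prime $p$}. To prove it I would pick $0\ne x\in\bigcap_m mB$; decomposing over primes shows $x$ has infinite $p$-height for every prime $p$. If $x$ has infinite order, its image $\bar x$ in the torsion-free group $B/\tau(B)$ is nonzero and of infinite $p$-height for any fixed $p$; choosing $\bar z_n$ with $p^n\bar z_n=\bar x$, torsion-freeness forces $p\bar z_{n+1}=\bar z_n$, so the $\bar z_n$ generate a copy of $\mathbb{Z}[1/p]$ and $\langle \bar z_n\rangle/\langle\bar x\rangle\cong\mathbb{Z}_{p^\infty}$ is the desired subquotient. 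If instead $x$ is torsion, replacing it by a suitable multiple I may assume it has order $p$ and infinite $p$-height; then any $z_n$ with $p^n z_n=x$ has order $p^{n+1}$, so the $p$-component $B_p$ is unbounded, whence $B_p$ contains a subgroup $\bigoplus_j\mathbb{Z}/p^{m_j}$ with $m_j\to\infty$, and sending $e_j$ to a generator of $\mathbb{Z}/p^{m_j}$ inside $\mathbb{Z}_{p^\infty}$ exhibits $\mathbb{Z}_{p^\infty}$ as a quotient of it.

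Granting the lemma, and noting conversely that a group with a $\mathbb{Z}_{p^\infty}$ subquotient has a non-residually-finite quotient (a divisible subgroup has no proper finite-index subgroup, so lies in the finite residual), I obtain that $A$ is ERF if and only if $A$ is Prüfer-free, i.e.\ has no subquotient isomorphic to any $\mathbb{Z}_{p^\infty}$. It then remains to match Prüfer-freeness with (i) and (ii). For the forward translation: an unbounded $A_p$ contains an unbounded direct sum of cyclic $p$-groups, which surjects onto $\mathbb{Z}_{p^\infty}$, so Prüfer-freeness forces (ii); and $A/\tau(A)$, being a quotient, is Prüfer-free, while if it had infinite torsion-free rank it would contain $\bigoplus_{\mathbb{N}}\mathbb{Z}$, which surjects onto $\mathbb{Z}_{p^\infty}$, forcing finite rank and giving (i). For the converse, given (i) and (ii) I would rule out a subquotient $U/V\cong\mathbb{Z}_{p^\infty}$ by examining the image of $U\cap\tau(A)$ in $U/V$: if it is all of $\mathbb{Z}_{p^\infty}$ then $A_p$ has $\mathbb{Z}_{p^\infty}$ as a quotient, contradicting (ii); if it is finite then $U/(V+(U\cap\tau(A)))\cong\mathbb{Z}_{p^\infty}$ is a subquotient of the torsion-free group $A/\tau(A)$, contradicting its Prüfer-freeness in (i).

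I expect the main obstacle to be the Lemma, and within it the torsion case, whose subtlety is precisely that a \emph{reduced} abelian $p$-group can have a nonzero first Ulm subgroup $\bigcap_n p^nB_p$ and thus fail to be residually finite while having no divisible subgroup; the point I must exploit is that any such group is forced to be of unbounded exponent and hence to contain an unbounded direct sum of cyclic groups, which is what produces the Prüfer quotient. The remaining bookkeeping, i.e.\ the equivalence of Prüfer-freeness with (i) and (ii), is routine once the Lemma is in place, the only care needed being to separate the contribution of the torsion part $\tau(A)$ from that of the torsion-free quotient $A/\tau(A)$.
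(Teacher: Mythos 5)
First, a point of comparison: the paper does not prove this statement at all --- it is imported verbatim from \cite[Proposition 3.1]{RRV} --- so there is no internal proof to measure yours against; what follows assesses your argument on its own terms. Your overall architecture is sound and is essentially the standard one: for abelian $A$, ERF is equivalent to every quotient being residually finite (your normality caveat is harmless here, since all subgroups are normal); residual finiteness of an abelian group $B$ is equivalent to $\bigcap_m mB=0$; the torsion-free case of your key lemma (the compatible chain $p\bar z_{n+1}=\bar z_n$ producing $\mathbb{Z}[1/p]/\mathbb{Z}\cong\mathbb{Z}_{p^\infty}$) is correct; and the final bookkeeping, including the two-case analysis of the image of $U\cap\tau(A)$ in a putative Prüfer section, is correct.

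The one genuine gap is the step you yourself identify as the crux. The claim in the body of your proof --- ``$B_p$ is unbounded, whence $B_p$ contains a subgroup $\bigoplus_j\mathbb{Z}/p^{m_j}\mathbb{Z}$ with $m_j\to\infty$'' --- is false as stated: $\mathbb{Z}_{p^\infty}$ itself is unbounded, yet all of its proper subgroups are finite cyclic, so it contains no such direct sum. (The conclusion of the lemma still holds there, but only because the Prüfer group sits inside as a subgroup, which is a different argument.) Your closing paragraph correctly restricts the claim to \emph{reduced} $p$-groups, but in that form it is not a routine observation: it is a consequence of Kulikov's theorem on basic subgroups (an unbounded reduced $p$-group has an unbounded basic subgroup, since a bounded pure subgroup is a direct summand and would force a divisible complement), and must be cited, e.g.\ from \cite{Fuchs}, or proved. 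As written, both the torsion case of your lemma and the implication ``ERF $\Rightarrow$ (ii)'' rest on this unproved (and, without ``reduced'', incorrect) assertion. For the lemma itself there is a cleaner repair avoiding basic subgroups entirely: given $x$ of order $p$ and infinite $p$-height in $B$, choose by Zorn's lemma a subgroup $M\le B$ maximal with respect to $x\notin M$; then every nonzero subgroup of $B/M$ contains the image of $x$, so $B/M$ is cocyclic, hence isomorphic to $\mathbb{Z}/p^n\mathbb{Z}$ or to $\mathbb{Z}_{p^\infty}$, and since the image of $x$ is nonzero of infinite height the finite case is impossible. Note, however, that for ``ERF $\Rightarrow$ (ii)'' unboundedness of $A_p$ does not hand you an element of infinite height, so there the basic-subgroup fact (or an equivalent) is still genuinely needed.
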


Recall that the Prüfer rank of a torsion-free abelian group $A$ is the dimension of the smallest $\mathbb Q$-vector space containing $A$.
In particular, if $A$ is a direct sum of copies of $\mathbb Z$, then the Prüfer rank of $A$ coincide with the rank of $A$ (the infimum of the size of generating sets).

As mentioned above, we are particularly interested in direct sums of cyclic groups. In this context, we obtain the following:

\begin{corollary}\label{Cor:SumERF}
Let $A:=\bigoplus_{v\in V}A_v$ where $A_v$ is cyclic for each $v\in V$. Write 
$$I:=\setst{v\in V}{A_v\cong\mathbb{Z}}\quad\text{and}\quad F:=V\setminus I$$
for the subset of indices in $V$ corresponding to infinite cyclic subgroups and for the subset of indices in $V$ corresponding to finite cyclic subgroups respectively.
Furthermore, let 
$$\mathcal P:=\setst{p \textnormal{ prime}}{\exists v\in F \text{ such that } p\textnormal{ divides }\abs{A_v}}$$
be the set of prime divisors of the orders of the finite cyclic groups $\{A_v\}_{v\in F}$. Then, the following holds:
\begin{enumerate}[\normalfont(i)]
\item if $I$ is infinite, then $A$ is not ERF;
\item if $I$ is finite and the groups in $\{A_v\}_{v\in F}$ have uniformly bounded orders, then $A$ is ERF;
\item if $\mathcal P$ is finite and the groups in $\{A_v\}_{v\in F}$ have unbounded orders, then $A$ is not ERF.
\end{enumerate}
\end{corollary}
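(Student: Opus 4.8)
The plan is to derive all three parts directly from the characterisation of abelian ERF groups in \cref{Thm:RRV}, so the first step is to compute the torsion subgroup, the torsion-free quotient, and the $p$-components of $A$. Since $A_v\cong\mathbb{Z}$ is torsion-free for $v\in I$ and $A_v$ is finite (hence torsion) for $v\in F$, the torsion subgroup is $\tau(A)=\bigoplus_{v\in F}A_v$, whence $A/\tau(A)\cong\bigoplus_{v\in I}\mathbb{Z}$ is free abelian of rank $\abs{I}$. Moreover, as $A/\tau(A)$ is torsion-free, all $p$-torsion lies in $\tau(A)$, so for every prime $p$ we have $A_p=\bigoplus_{v\in F}(A_v)_p$, a direct sum of the cyclic $p$-parts of the finite factors. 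With these descriptions in hand, each case reduces to verifying (or violating) conditions (i) and (ii) of \cref{Thm:RRV}.

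For part (i), if $I$ is infinite then $A/\tau(A)\cong\bigoplus_{v\in I}\mathbb{Z}$ has Prüfer rank $\abs{I}=\infty$; it is therefore not of finite Prüfer rank, so condition (i) of \cref{Thm:RRV} fails and $A$ is not ERF.

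For part (ii), assume $I$ is finite and the orders $\{\abs{A_v}\}_{v\in F}$ are bounded by some $N$. Then $A/\tau(A)$ is free abelian of finite rank, hence finitely generated, so it is torsion-free of finite Prüfer rank and Prüfer-free (its subgroups are finitely generated and thus cannot surject onto the non-finitely-generated group $\mathbb{Z}_{p^\infty}$); condition (i) holds. For condition (ii), each $A_p=\bigoplus_{v\in F}(A_v)_p$ is a direct sum of cyclic $p$-groups of order at most $N$, so its exponent divides the largest power of $p$ not exceeding $N$ and is in particular finite. Both conditions of \cref{Thm:RRV} hold, so $A$ is ERF.

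For part (iii), assume $\mathcal P$ is finite and the orders $\{\abs{A_v}\}_{v\in F}$ are unbounded. The one step that is not pure bookkeeping, and which I expect to be the crux, is a pigeonhole argument producing a prime of unbounded valuation: writing $\abs{A_v}=\prod_{p\in\mathcal P}p^{a_{v,p}}$, if for each of the finitely many $p\in\mathcal P$ the exponents $a_{v,p}$ were bounded over $v\in F$, then the orders $\abs{A_v}$ would be uniformly bounded, contradicting the hypothesis. Hence there is a prime $p\in\mathcal P$ with $\sup_{v\in F}a_{v,p}=\infty$, and for this $p$ the component $A_p=\bigoplus_{v\in F}(A_v)_p$ contains cyclic summands of unbounded order $p^{a_{v,p}}$, so $A_p$ has infinite exponent. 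This violates condition (ii) of \cref{Thm:RRV}, and thus $A$ is not ERF.
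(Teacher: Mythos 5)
Your proof is correct and follows essentially the same route as the paper's: compute $\tau(A)=\bigoplus_{v\in F}A_v$, identify $A/\tau(A)\cong\bigoplus_{v\in I}\mathbb{Z}$ and $A_p=\bigoplus_{v\in F}(A_v)_p$, and then check the two conditions of \cref{Thm:RRV} case by case, with the pigeonhole over the finitely many primes in $\mathcal P$ handling part (iii). Your write-up merely makes explicit some details (the Prüfer-freeness verification in part (ii) and the pigeonhole step) that the paper leaves implicit.
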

\begin{proof}
The group $A/\tau(A)\cong \bigoplus_{\abs I}\mathbb{Z}$  is always torsion-free and has rank $\abs I$.
It follows that $A/\tau(A)$ has finite rank if and only if $I$ is finite, if and only if $A/\tau(A)$ is Prüfer-free.
In other words, the first condition of \cref{Thm:RRV} is satisfied if and only if $I$ is finite.

The second condition of \cref{Thm:RRV} is satisfied as soon as the orders $\{|A_v|\}_{v\in F}$ are uniformly bounded on $F$. If the orders $\{|A_v|\}_{v\in F}$ are unbounded and $\mathcal P$ is finite, then there exists some $p\in \mathcal{P}$ such that $A_p$ has unbounded exponent and thus, the second condition of \cref{Thm:RRV} fails.
\end{proof}

If the orders $\{|A_v|\}_{v\in F}$ are unbounded and $\mathcal P$ is infinite, then it is both possible that $A_p$ has finite exponent for every prime $p\in \mathcal{P}$, or that there is $p\in\mathcal{P}$ such that~$A_p$ has infinite exponent. In this case, without any further information, one cannot conclude whether $A$ is ERF or not; see \cref{Ex:Bounded} and \cref{Ex:BoundedNonTorsion}.

Observe that if $T_{\mathbf{m}}$ is \emph{bounded} (i.e. there exists $M$ such that $m_n\le M$ for all $n\ge 1$) and $A=\bigoplus_{v\in V}A_v$ is a subgroup of $\Aut~T_{\mathbf{m}}$ which decomposes into a sum of cyclic groups, then $\mathcal P$ is finite.
Indeed, for any $g\in\Aut~T_{\mathbf{m}}$ of finite order $o(g)$ and any $p$ prime divisor of $o(g)$, we necessarily have $p\le \max_{n\ge 1}\{m_n\}\le M$. In particular, this holds for any regular rooted tree~$T_m$.

\subsection{Non-ERF subgroups in groups with micro-supported actions}

We now generalize the first example of a non-ERF subgroup of the first Grigorchuk group given by Cornulier in \cite[Example 3.10(8)]{Cornulier}.

We need the following lemma, whose proof is sketched, for weakly branch groups, in~\cite[Lemma 6.8]{BGZ}. We provide a complete proof in the more general context of micro-supported actions:

\begin{lemma}\label{Lem:CyclicSub}
Let $X$ be a Hausdorff space. Let $G\acts X$ be a micro-supported action and let $U\subseteq X$ be a non-empty open subset of $X$. Then for any $N\ge 2$, the subgroup $\mathrm{rist}_G(U)$ contains an element of order at least $N$. Moreover, if $\mathrm{rist}_G(U)$ contains an element of finite order for every non-empty open $U\subseteq X$, then $\mathrm{rist}_G(U)$ contains elements of finite order at least $N$.
\end{lemma}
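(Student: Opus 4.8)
The plan is to prove a single statement by induction on $r\ge 1$: the rigid stabilizer $\mathrm{rist}_G(U)$ of every non-empty open $U\subseteq X$ contains an element of order at least $2^r$, which moreover can be taken of finite order whenever every rigid stabilizer of a non-empty open set contains a non-trivial element of finite order. Choosing $r=\lceil\log_2 N\rceil$ then yields both assertions at once. For the first statement, if $\mathrm{rist}_G(U)$ happens to contain an element of infinite order we are immediately done; otherwise every element is torsion, so at each stage of the induction we may feed in a non-trivial element of finite order, exactly as in the \emph{moreover} regime.

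The base case $r=1$ is just micro-supportedness: $\mathrm{rist}_G(U)\ne 1$ supplies an element of order $\ge 2$. For the inductive step I would start from a non-trivial $g\in\mathrm{rist}_G(U)$ of finite order $m$, fix a prime $p\mid m$, and pass to $g':=g^{m/p}$, which has order exactly $p$. The reason for working with a prime-order element is that every non-trivial $g'$-orbit then has length exactly $p$ and $(g')^p=\mathrm{Id}$. Picking a point $x\in U$ moved by $g'$ and using the Hausdorff property to separate the $p$ distinct points $x,g'x,\dots,(g')^{p-1}x$, I would build a non-empty open set $V_0\subseteq U$ whose translates $V_i:=(g')^i(V_0)$, $i=0,\dots,p-1$, are pairwise disjoint; then $g'$ cyclically permutes $V_0,\dots,V_{p-1}$ and, crucially, $(g')^p$ restricts to the identity on $V_0$.

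Applying the induction hypothesis to the non-empty open set $V_0$ produces $h\in\mathrm{rist}_G(V_0)\le\mathrm{rist}_G(U)$ of order $\ge 2^{r-1}$ (finite in the \emph{moreover} regime). The heart of the step is the order computation for $w:=hg'\in\mathrm{rist}_G(U)$. Since $h$ is supported in $V_0$ and $g'$ cyclically permutes the blocks, $w$ permutes $V_0,\dots,V_{p-1}$ as a $p$-cycle, the power $w^p$ is supported in $\bigcup_i V_i$ and acts on each block as a conjugate of $h$, and because $(g')^p=\mathrm{Id}$ one gets $w^p|_{V_0}=h$. Hence $o(w^p)=o(h)$, and combining this with the fact that the block permutation of $w$ is a $p$-cycle yields $o(w)=p\cdot o(h)\ge 2\cdot 2^{r-1}=2^r$, finite whenever $o(h)$ is. This closes the induction.

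The main obstacle is precisely the interference of $g$ with itself across the blocks: for a general non-trivial $g$ permuting $d$ blocks, the return map $(g^d)|_{V_0}$ need not be trivial, which would destroy the clean identity $o(w)=p\cdot o(h)$ and could leave the order stuck at small values. Replacing $g$ by the prime-order power $g'$ is what removes this difficulty and makes the doubling step work. The Hausdorff hypothesis enters only to produce the disjoint translates $V_i$, while micro-supportedness is used both to start the induction and to guarantee the non-trivial element $h$ inside each $V_0$.
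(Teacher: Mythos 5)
Your proof is correct, and its engine coincides with the paper's: use the Hausdorff property to separate a finite orbit by pairwise disjoint open sets, so that the relevant conjugates of an element supported in one block have pairwise disjoint supports and therefore commute, and conclude that orders multiply. The packaging, however, is genuinely different, and in one respect tighter. The paper argues iteratively: it builds nested opens $U_1\supsetneq U_2\supsetneq\cdots$ with non-trivial $g_i\in\mathrm{rist}_G(U_i)$ and shows that the products $g_1g_2\cdots g_n$ have orbits of length at least $2^n$, reserving the prime-order trick for the ``moreover'' statement; tracking orbit lengths lets it treat the first assertion without ever discussing torsion. You instead run an induction in which both assertions are funnelled through the exact order formula $o(hg')=p\cdot o(h)$, reducing the first assertion to the torsion case by the dichotomy ``an element of infinite order finishes the proof immediately''. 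The payoff of your route is precisely the obstacle you name: with $g'$ of prime order, $(g')^p=\mathrm{Id}$ holds globally, so $(hg')^p=\prod_{i=0}^{p-1}(g')^{i}h(g')^{-i}$ is an identity of group elements and there is no interference from return maps. The paper's first-assertion bookkeeping is more delicate on exactly this point: its claim $x_2^{(g_1g_2)^{k_1}}=x_2^{g_2}$ tacitly uses that $g_1^{k_1}$ fixes $x_2$, which is guaranteed for the chosen point $x_1$ but not for an arbitrary point $x_2\in U_2$ moved by $g_2$; your prime-power reduction is exactly what removes this difficulty (and is, in effect, how the paper itself handles the ``moreover'' part, where the computation is done at the level of group elements rather than points). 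So: same mechanism, but your inductive, all-prime-order formulation is the more airtight of the two write-ups, at the modest cost of an extra case distinction at the start.
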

\begin{proof}
First note that it is enough to produce elements in $\mathrm{rist}_G(U)$ with arbitrarily large orbits in~$X$, as for any $g\in G$ and any $x\in X$ the length of the $g$-orbit of $x$ divides the order of $g$.

Let $U_1:=U$ and consider $g_1$ a non-trivial element of $\mathrm{rist}_G(U_1)$. Let $x_1\in X$ be a point moved by $g_1$, whose $g_1$-orbit is of length $k_1\ge 2$. Since $X$ is Hausdorff, one can find an open neighbourhood $U_2\subseteq U_1$ of $x_1$ such that $U_2,U_2^{g_1},\dots,U_2^{g_1^{k_1-1}}$ are pairwise disjoint. Now, let $g_2$ be any non-trivial element of $\mathrm{rist}_G(U_2)$ and $x_2\in U_2$ any point moved by~$g_2$. We claim that the $g_1g_2$-orbit of $x_2$ is of length $k_1k_2$ for some $k_2\ge 2$. Indeed, since~$g_2$ is in $\mathrm{rist}_G(U_2)$ and $U_2,U_2^{g_1},\dots,U_2^{g_1^{k_1-1}}$ are pairwise disjoint, $g_2$ acts trivially on $U_2^{g_1},\dots,U_2^{g_1^{k_1-1}}$ and we get $$x_2^{(g_1g_2)^i}=x_2^{g_1^i}\ne x_2^{g_1^j}=x_2^{(g_1g_2)^j}$$
for $0\le i < j\le k_1-1$. Moreover, as
$$(g_1g_2)^{k_1}=g_1^{k_1}g_2^{g_1^{k_1-1}}\dotsb g_2^{g_1}g_2,$$
we further get 
$$x_2^{(g_1g_2)^{k_1}}=
x_2^{g_2}\ne x_2.$$
Therefore, as the $g_1g_2$-orbit of $x_1$ is of length $k_1$, the length of the $g_1g_2$-orbit of $x_2$ is $k_1k_2$ for some $k_2\ge 2$.

Repeating the above procedure, one can find a sequence of non-empty open subsets
$$U=:U_1\supsetneq U_2\supsetneq\dotsb \supsetneq U_i\supsetneq\dotsb,$$
and for each $i\ge 1$ an element $g_i \in\mathrm{rist}_G(U_{i})$ such that $g_1g_2\cdots g_n\in \mathrm{rist}_G(U)$ has an orbit of length $k_1k_2\dotsb k_n\ge 2^n$ for every $n\ge 1$. This yields elements in $\mathrm{rist}_G(U)$ with arbitrarily large orbits in $X$, concluding the proof of the first assertion.

For the second assertion, if for every non-trivial open subset $U\subseteq X$ the subgroup $\mathrm{rist}_G(U)$ contains elements of finite order, then the elements $g_1g_2\dotsb g_n$ can be further assumed to be of finite order. Indeed, by possibly taking powers, we may assume that each $g_i$ has prime order $p_i$. Then $x_1$ can be chosen such that the $g_1$-orbit of $x_1$ is of length $p_1$. Therefore $(g_1g_2)^{p_1}$ maps each $U_2^{g_1^i}$ to itself and acts trivially on $X\setminus (\bigsqcup_{i} U_2^{g_1^i})$, where $0\le i\le p_1-1$. Furthermore the restriction of $(g_1g_2)^{p_1}$ to each $U_2^{g_1^i}$ coincides with the restriction of $g_2^{g_1^i}$ to~$U_2^{g_1^i}$, so $(g_1g_2)^{p_1}$ has order $p_2$. Hence $g_1g_2$ has order $p_1p_2$. Iterating this argument we get that $g_1g_2\dotsb g_n$ has order $p_1p_2\dotsb p_n$.
\end{proof}

\begin{theorem}
    \label{theorem: main result for micro supported actions}
    Let $X$ be a Hausdorff space and let $G$ be a group such that:
    \begin{enumerate}[\normalfont(i)]
        \item $G$ acts on $X$ via a micro-supported action $G\acts X$;
        \item there are only finitely many primes dividing the order of the torsion elements in $G$.
    \end{enumerate}
    Then $G$ contains a continuum of pairwise distinct 
    non-closed and
    non-ERF subgroups so, in particular, $G$ is not ERF. Furthermore, if every rigid stabilizer of $G$ contains a torsion element (for example if $G$ is torsion), then $G$ has a continuum of pairwise non-isomorphic
    non-closed and
    non-ERF subgroups.
\end{theorem}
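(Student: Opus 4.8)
The plan is to construct, for a suitable collection of incomparable open sets, a family of subgroups built as direct sums of infinite cyclic groups (when torsion is unavailable) or of unboundedly-large finite cyclic groups (when torsion is available), and then invoke \cref{Cor:SumERF} to certify that each such subgroup is non-ERF. The key input is \cref{Lem:CyclicSub}, which guarantees that every rigid stabilizer contains elements of arbitrarily large (possibly finite) order. So first I would fix a countably infinite family of pairwise disjoint non-empty open subsets $\{W_n\}_{n\ge 1}$ of $X$; since the action is micro-supported and $X$ is Hausdorff, starting from any single non-empty open set one can recursively split off smaller disjoint open pieces, so such a family exists. The rigid stabilizers $\mathrm{rist}_G(W_n)$ then commute pairwise (their supports are disjoint), which is exactly what lets us form internal direct sums.

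Next I would treat the two regimes. In the general case, in each $\mathrm{rist}_G(W_n)$ pick, via \cref{Lem:CyclicSub}, an element $a_n$ of order at least $2^n$; because the supports are disjoint these generate their internal direct sum $\bigoplus_n \gen{a_n}$. If infinitely many of the $a_n$ have infinite order, this falls into case (i) of \cref{Cor:SumERF} and is already non-ERF; otherwise infinitely many have finite but unboundedly large order, and since hypothesis (ii) bounds the relevant primes, the set $\mathcal P$ is finite, so case (iii) of \cref{Cor:SumERF} applies and the group is again non-ERF. In the torsion-enriched case, where every rigid stabilizer contains a torsion element, the second assertion of \cref{Lem:CyclicSub} lets me choose each $a_n$ to have finite order $\ge 2^n$, and these finite cyclic summands with finitely many prime divisors but unbounded orders again land squarely in case (iii).

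To produce a \emph{continuum} of pairwise distinct subgroups rather than a single one, I would index subfamilies by infinite subsets $S\subseteq\mathbb{N}$ (or by binary sequences), setting $H_S:=\bigoplus_{n\in S}\gen{a_n}$. There is a continuum of such $S$; each $H_S$ is still an infinite direct sum of the same type and hence non-ERF by the argument above, so every $H_S$ is automatically non-closed in its own profinite topology and, being non-ERF, witnesses that $G$ is non-ERF. The distinctness of the $H_S$ needs a short argument: the support of $H_S$ in $X$ (the union $\bigcup_{n\in S}$ of the open pieces actually moved) determines $S$, or alternatively one reads off $S$ from which coordinates of the direct product decomposition are non-trivial, so $S\mapsto H_S$ is injective and the family has cardinality continuum. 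For the final \emph{non-isomorphic} refinement in the torsion case, I would instead encode an infinite sequence of orders: by choosing for each coordinate an element whose finite order realizes a prescribed value (again available from \cref{Lem:CyclicSub}), different sequences of orders yield non-isomorphic direct sums of finite cyclic groups, and there is a continuum of admissible order-sequences.

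The main obstacle I anticipate is not the non-ERF certification, which is handed to us cleanly by \cref{Cor:SumERF}, but rather organizing the bookkeeping so that all three desiderata hold simultaneously: the summands must genuinely form an internal direct sum (requiring disjoint supports, hence the recursive construction of the $W_n$), the orders must grow without bound while keeping the prime set finite (forcing careful use of hypothesis (ii) and of the "finite order" clause of \cref{Lem:CyclicSub}), and the indexing must be rich enough to yield a continuum of provably distinct—and in the torsion case non-isomorphic—subgroups. The non-isomorphism claim in particular requires an invariant of direct sums of finite cyclic groups (such as the multiset of orders, or the sequence of $p$-ranks) that is preserved under abstract isomorphism; establishing that different order-sequences cannot give isomorphic groups is where I would be most careful, though for abelian groups the structure theory makes this routine once the invariant is chosen.
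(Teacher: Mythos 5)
There is a genuine gap: you never establish that your subgroups are non-closed in the profinite topology of $G$, which is half of what the theorem asserts. Your remark that each $H_S$ is ``automatically non-closed in its own profinite topology'' conflates two different properties: being non-ERF (i.e.\ \emph{containing} subgroups that are not closed in the profinite topology of $H_S$) and being non-closed \emph{as a subgroup of} $G$. The first does not imply the second. In fact, your $H_S=\bigoplus_{n\in S}\gen{a_n}$ are exactly the subgroups the paper denotes $H_\mathcal{V}$, and the paper observes that these are \emph{closed} in $G$ (in the tree setting they are closed already in the congruence topology); so this family witnesses that $G$ is not ERF, but it cannot serve as the continuum of non-closed, non-ERF subgroups that the statement demands.

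The missing idea is to pass from $H_\mathcal{V}$ to a kernel. In the torsion case one arranges $H_\mathcal{V}\cong\bigoplus_{i\ge 1}\mathbb{Z}/p^{n_i}\mathbb{Z}$ with $n_i$ strictly increasing (possible by hypothesis (ii)), so $H_\mathcal{V}$ surjects onto the Prüfer group $\mathbb{Z}_{p^\infty}$; let $K_\mathcal{V}$ be the kernel. Since $\mathbb{Z}_{p^\infty}$ is not residually finite, $K_\mathcal{V}$ is not closed in the profinite topology of $H_\mathcal{V}$; and because every finite-index subgroup of $G$ meets $H_\mathcal{V}$ in a finite-index subgroup of $H_\mathcal{V}$, the subspace topology induced from $G$ on $H_\mathcal{V}$ is coarser than the profinite topology of $H_\mathcal{V}$, so $K_\mathcal{V}$ is not closed in $G$ either. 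Moreover $K_\mathcal{V}$ is still a direct sum of cyclic $p$-groups of unbounded orders (it is generated by the elements $k_i$ with a $1$ in position $i$ and $-p^{n_{i+1}-n_i}$ in position $i+1$), hence non-ERF by \cref{Cor:SumERF}, and uniqueness of the decomposition of abelian $p$-groups into direct sums of cyclic groups yields the pairwise non-isomorphism over binary sequences. When the $a_n$ have infinite order one instead takes the kernel of a projection $\bigoplus_{i\ge 1}\mathbb{Z}\to\mathbb{Z}_{2^\infty}$, obtaining non-closed, non-ERF subgroups that are pairwise distinct though all abstractly isomorphic, which matches the weaker conclusion of the theorem in that case. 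Your bookkeeping for distinctness and non-isomorphism is sound, but it is applied to the wrong family of subgroups.
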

\begin{proof}

Let $\mathcal{U}:=\{U_i\}_{i\ge 1}$ be an infinite collection of pairwise disjoint non-empty subsets of $X$.
Then, by \cref{Lem:CyclicSub}, there exists a subcollection $\mathcal V:=\{V_i=U_{r_i}\}_{i\geq 1}\subseteq \mathcal U$ and an abelian subgroup $H_\mathcal{V}\le G$ such that:
\begin{enumerate}[\normalfont(i)]
    \item $H_\mathcal{V}:=\bigoplus_{i\ge 1}H_i$;
    \item each $H_i\le \mathrm{rist}_G(V_i)$ is cyclic;
    \item either $|H_i|=\infty$ for all $i$, or there exists a prime $p$ such that $|H_i|=p^{n_i}$ for some strictly increasing sequence $\{n_i\}_{i\geq 1}$.\label{itemthree}
\end{enumerate}
Thus $H_\mathcal{V}$ is non-ERF by \cref{Cor:SumERF}.
We will first prove the Theorem under the assumption that the $H_i$ are all finite. This will be the case if every rigid stabilizer of $G$ contains torsion elements, by \cref{Lem:CyclicSub}.
Let $p$ be the prime given by \textcolor{teal}{(}\ref{itemthree}\textcolor{teal}{)} above.
Then $H_\mathcal{V}$ projects onto the Prüfer group $\mathbb{Z}_{p^\infty}$. Let~$K_\mathcal{V}$ be the kernel of this projection. For $j\geq i\ge 1$, let $f_j^i\colon \mathbb{Z}/p^{n_i} \mathbb{Z} \to \mathbb{Z}/p^{n_j} \mathbb{Z}$ be the standard inclusion given by $x\mapsto p^{n_j-n_i}x$. 
Then, an element $(x_1,x_2,\dots, x_m,0,\dots)\in H_\mathcal{V}$ is in $K_\mathcal{V}$ if and only if $\sum_{i=1}^mf_m^i(x_i)\equiv 0 \pmod{p^{n_m}}$.

The subgroup $K_\mathcal{V}$ is a torsion abelian $p$-group of infinite rank, which is not closed in $G$. Indeed, as the subgroup ~$H_\mathcal{V}$ is closed in $G$ (in the congruence topology and thus also in the profinite topology), not being closed in $G$ follows from $K_\mathcal{V}$ not being closed in $H_\mathcal{V}$. The latter follows from $K_\mathcal{V}$ being the kernel of the projection of $H_\mathcal{V}$ onto the Prüfer group $\mathbb{Z}_{p^\infty}$ which is not residually finite. Moreover, it follows from the description of $K_\mathcal{V}$ that it contains elements of arbitrary large order and is hence not ERF by \cref{Thm:RRV}.

Let us show now that this construction produces indeed a continuum of pairwise disjoint subgroups. For any binary sequence $\mathbf b:=(b_i)_{i\geq 1}$ with infinitely many $1$s, define $\mathcal V_{\mathbf b}$ to be the subset of $\mathcal V$ consisting of the $V_i$ such that $b_i=1$. Then, if $\mathbf b\neq  \widetilde{\mathbf b}$, then there exists $i\ge 1$, such that exactly one of $K_{\mathcal{V}_{\mathbf b}}$ and $K_{\mathcal{V}_{\widetilde{\mathbf b}}}$ acts non-trivially on~$V_i$.
In particular, the subgroups $K_{\mathcal{V}_{\mathbf b}}$ and $K_{\mathcal{V}_{\widetilde{\mathbf b}}}$ cannot be equal.

Now, let $k_i:=(0,\dots, 0,1,-p^{n_{i+1}-n_i},0\dots)$ be the element of $H_\mathcal{V}$ with a $1$ in position $i$, a $-p^{n_{i+1}-n_i}$ in position $i+1$ and $0$s everywhere else. Then the $k_i$'s generate $K_\mathcal{V}$ and $K_\mathcal{V}$ is the direct sum of the cyclic subgroups $\gen{k_i}$, where each $k_i$ has order $p^{n_i}$.
Therefore, $K_{\mathcal{V}_{\mathbf b}}$ is abstractly isomorphic to $\bigoplus_{i\geq 1}\mathbb{Z}/p^{b_in_i}\mathbb{Z}$.
However, the decomposition of a $p$-group into direct sums of cyclic groups of prime power orders is unique up to permutation of the factors, see \cite[Theorem 17.4.]{Fuchs}, so $K_{\mathcal{V}_{\mathbf b}}$ and $K_{\mathcal{V}_{\widetilde{\mathbf b}}}$ are isomorphic if and only if $\mathbf b=\widetilde{\mathbf b}$.

We finally treat the case where all the $H_i$ are infinite.
In this case, $H_{\mathcal{V}}$ is isomorphic to $\bigoplus_{i\geq 1}\mathbb{Z}$, which projects onto $\bigoplus_{i\geq 1}\mathbb{Z}/2^i\mathbb{Z}$ and hence onto $\mathbb{Z}_{2^\infty}$.
Let $L_{\mathcal{V}}$ be the kernel of the projection $H_{\mathcal{V}}\to \mathbb{Z}_{2^\infty}$.
Then similarly to the above case, we have that $L_{\mathcal{V}}$ is a non-closed subgroup of $G$, it has infinite rank and it is isomorphic to $\bigoplus_{i\geq 1}\mathbb{Z}$ and thus not ERF.
We can once again construct a continuum of pairwise distinct such subgroups $L_{\mathcal{V}_{\mathbf b}}$, indexed by binary sequences containing infinitely many $1$s. However, this time, all the $L_{\mathcal{V}_{\mathbf b}}$ are abstractly isomorphic.
\end{proof}

If $G\leq \Aut~T_{\mathbf{m}}$, then a natural choice for $\mathcal U$ in the proof of \cref{theorem: main result for micro supported actions} is $\{\partial T_{\mathbf{m}}^{v_i}\}_{i\ge 1}$ where the $v_i$ are pairwise incomparable vertices of $T_{\mathbf{m}}$.

\subsection{Examples}

If $G$ is torsion, the condition of only finitely many primes dividing the order of the torsion elements of $G$ cannot be dropped from \cref{theorem: main result for micro supported actions}.

Let $G$ be a torsion branch group such that infinitely many primes divide the order of the elements of $G$.
This property passes to finite index subgroups, and in particular to rigid stabilizers of vertices.
Therefore, for any collection of pairwise incomparable vertices $V=\{v_i\}_{i\geq 1}$, there exists ERF subgroups of the form $H_V=\bigoplus_{i\geq 1}H_i$ with each $H_i\leq\mathrm{rist}_G(v_i)$ cyclic.
Indeed, it is enough to take the subgroups~$H_i$ of increasing prime order.

Observe that, in this generality, we can only conclude that some of the $H_V$ we construct are ERF, but we cannot guarantee that all of them are ERF. Indeed, rigid stabilizers might contain elements of order a power of a fixed prime $p$. In any case, the following example shows that it may happen that every subgroup of the form $H_V$ is indeed ERF:

\begin{example}
\label{Ex:Bounded}
Let $(p_i)_{i\ge 1}$ be a sequence of prime numbers with $\liminf p_i=\infty$, and let $T:=T_{(p_i)_{i\ge 1}}$.
For each $i\ge 1$, let $v_i$ be a vertex at the level $i$ in $T$ and let $a_i$ be the rooted automorphism of $T^{v_i}$ cyclically permuting the $p_i$ children of~$v_i$.
Then the group $G:=\langle a_i\mid i\ge 1\rangle$ is a torsion branch group. 
Moreover, for any prime $p$, there exists a level $i_p$ in $T$ such that for all $i\geq i_p$ we have $p_i>p$.
By the construction of $G$, this implies that any element of $G$ has $p$-exponent at most~$p^{i_p}$.
In particular, for any collection of pairwise incomparable vertices $V=\{v_i\}_{i\geq 1}$, any subgroup of the form $H_V=\bigoplus_{i\geq 1}H_i$ with $H_i\leq\mathrm{rist}_G(v_i)$ cyclic is necessarily ERF by \cref{Thm:RRV}.
\end{example}

If $G$ is a branch group containing an element of infinite order, then the construction in \cref{theorem: main result for micro supported actions} always yields a non-ERF subgroup of $G$:

\begin{remark}
\label{Ex:BoundedNonTorsion}

For a branch group $G$, it is equivalent that $G$ contains an element of infinite order and that every $\mathrm{rist}_G(v)$ contains an element of infinite order. Indeed, if $g\in G$ is of infinite order, for any $n\ge 1$, some power of $g$ is in $\mathrm{rist}_G(n)$ and it has infinite order. However, as $\mathrm{rist}_G(n)$ is the direct product of the rigid vertex stabilizers of vertices at the $n$th level, it contains an element of infinite order if and only if one of the rigid vertex stabilizers (and all of them by level-transitivity) contains an element of infinite order. Thus, if $G$ is branch, every $\mathrm{rist}_G(v)$ contains an element of infinite order if and only if $G$ is not torsion.
\end{remark}

\section{Non-closed ERF subgroups of weakly branch groups}
\label{section: main proofs}

In this section we prove \cref{Theorem: uncountably many subgroups} and illustrate it with some examples.

\subsection{The main proof}

Before proving \cref{Theorem: uncountably many subgroups}, we set some notation. Let $u\in T_\mathbf{m}$ and $v\in T_\mathbf{m}^u$. Then, we write $uv$ for the unique vertex in $T_\mathbf{m}$ corresponding to~$v$.

\begin{proof}[Proof of \cref{Theorem: uncountably many subgroups}]
    Fix $p\ge 2$ a prime satisfying condition (i). Let $a\in G$ be an element of order $p$ and $h_v\in \mathrm{rist}_G(v)$ an element of order $p$ for each $v\in T_\mathbf{m}$. Since $a$ has order $p$, let us consider $n_0$ a level of $T_\mathbf{m}$ at which there is $a$-orbit $\{x, x^a \dotsc, x^{a^{p-1}}\}$ of size $p$.
    
    Let us consider an infinite set $U_x$ of pairwise incomparable vertices in $T_\mathbf{m}^{x}$ and let us further define $V_{x}:=\{xv\mid v\in U_{x}\}$. Then $V_x^{a^i}\subset T_\mathbf{m}^{x^{a^i}}$ for $0\le i\le p-1$, and we may define the disjoint union
     $$\mathcal{V}:=V_{x}\sqcup V_{x}^a \sqcup\dotsb \sqcup V_{x}^{a^{p-1}}.$$
     Note that $\mathcal{V}$ is a set of pairwise incomparable vertices in $T_\mathbf{m}$ and that we have the equality $\mathcal{V}^a=\mathcal{V}$, as $a$ has order $p$ and the $a$-orbit of $x$ is of size $p$. 
     
     Now $ah_v$ is of order $p^2$ for every $v\in \mathcal{V}$. Indeed 
     \begin{align*}
         (ah_v)^{p^2}&=((ah_v)^p)^p=(a^ph_v^{a^{p-1}}\dotsb {h_v^{a}}h_v)^p=(h_v^{a^{p-1}}\dotsb {h_v^{a}}h_v)^p\\
         &=(h_v^{a^{p-1}})^p\dotsb ({h_v^{a}})^p(h_v)^p=1,
     \end{align*}
     as $v,v^a,\dotsc,v^{a^{p-1}}\in \mathcal{V}$ are incomparable vertices, so $h_v,h_{v}^a,\dotsc,h_{v}^{a^{p-1}}$ are commuting elements of order $p$.
     
     Let us define the subgroup 
     $$H_\mathcal{V}:= \langle ah_v\mid v\in V_x\rangle.$$

     Then $a\in \overline{H}_\mathcal{V}$. Indeed, as $V_x$ is infinite, for each $n\ge 1$ there exists $v\in V_x$ below level $n$, so 
     \begin{align}
     \label{align: first profinite}
         a=ah_v\cdot h_v^{-1}\in H_\mathcal{V}\mathrm{st}_G(n).
     \end{align}
     Thus 
     \begin{align}
     \label{align: second profinite}
         a\in \bigcap_{n\ge 1}H_\mathcal{V}\mathrm{st}_G(n)=\overline{H}_\mathcal{V}.
     \end{align}

     We shall prove $a\notin H_\mathcal{V}$, which yields $\overline{H}_\mathcal{V}>H_\mathcal{V}$ proving $H_\mathcal{V}$ is not closed in~$G$ in the congruence topology. Assume by contradiction that $a\in H_\mathcal{V}$. Since the generators of $H_\mathcal{V}$ are of finite order, we may assume that $a$ can be written as a finite word
     \begin{align*}
         a=ah_{v_1}ah_{v_2}\dotsb ah_{v_{r-1}}ah_{v_r},
     \end{align*}
     where $v_1,\dotsc,v_r\in V_x$. Since $h_{v_1},\dotsc, h_{v_r}\in \mathrm{st}_G(n_0)$ by assumption, we must have $a=a^r$ modulo $\mathrm{st}_G(n_0)$. Thus $a^r=a$ as $\langle a\rangle$ acts faithfully on level $n_0$. In particular $r\equiv 1$ mod $p$ as $a$ has order~$p$. Therefore
     \begin{align}
     \label{align: erf equation}
         a&=ah_{v_1}ah_{v_2}\dotsb ah_{v_{r-1}}ah_{v_r} =a^rh_{v_1}^{a^{r-1}}\dotsb h_{v_{r-1}}^{a} h_{v_r},
     \end{align}
     and simplifying $a$ from both sides we obtain the relation
     \begin{align}
     \label{align: product of hv}
         h_{v_1}^{a^{r-1}}\dotsb h_{v_{r-1}}^{a} h_{v_r}&=1.
     \end{align}
     However, since $\{v_1^{a^{r-1}},\dotsc, v_{r-1}^{a},v_r\}\subset \mathcal{V}$, all these vertices are either equal or pairwise incomparable, so the non-trivial elements $h_{v_1}^{a^{r-1}},\dotsc, h_{v_{r-1}}^{a}, h_{v_r}$ commute. Hence
     $$h_{v_1}^{a^{r-1}}\dotsb h_{v_{r-1}}^{a} h_{v_r}\ne 1$$
     as $r\equiv 1$ mod $p$ and $h_v$ has order $p$ for every $v\in T_\mathbf{m}$. This contradicts \cref{align: product of hv}. Thus $a\notin H_\mathcal{V}$. 

     To show that $H_\mathcal{V}$ is ERF just note that the finite index subgroup $\mathrm{st}_{H_\mathcal{V}}(1)$ is ERF by \cref{Cor:SumERF}, since we have
     $$\mathrm{st}_{H_\mathcal{V}}(1)\le \langle h_v^{a^i}\mid v\in V_x\text{ and }0\le i\le p-1\rangle\cong \bigoplus_{v\in \mathcal{V}}\mathbb{Z}/p\mathbb{Z}$$
     arguing as in \cref{align: erf equation}. Hence $H_\mathcal{V}$ is also ERF. 

     To conclude, let us consider any binary sequence $\mathbf b:=(b_i)_{i\geq 1}$, and define $U_{x,\mathbf b}$ to be the subset of $U_x:=\{u_i\}_{i\ge 1}$ consisting of the $u_i$ such that $b_i=1$ as in the proof of \cref{theorem: main result for micro supported actions}. We define $V_{x,\mathbf{b}}:=\{xv\mid v\in U_{x,\mathbf{b}}\}$, the disjoint union $$\mathcal{V}_\mathbf{b}:=V_{x,\mathbf{b}}\sqcup V_{x,\mathbf{b}}^a\sqcup\dotsb \sqcup V_{x,\mathbf{b}}^{a^{p-1}}$$ 
     and the subgroup
     $$H_\mathcal{V_\mathbf{b}}:= \langle ah_v\mid v\in V_{x,\mathbf{b}}\rangle.$$
     The same argument as in the proof of \cref{theorem: main result for micro supported actions} shows that $H_{\mathcal{V}_\mathbf{b}}=H_{\mathcal{V}_{\widetilde{\mathbf b}}}$ if and only if $\mathbf{b}=\widetilde{\mathbf b}$, which yields a continuum of pairwise distinct ERF subgroups of $G$ which are non-closed in the congruence topology.
\end{proof}

We note that if $G$ is a weakly branch group satisfying the assumption in \cref{theorem: main result for micro supported actions} and whose rigid vertex stabilizers all have torsion elements, then $G$ satisfies condition (i) in \cref{Theorem: uncountably many subgroups}.

\begin{example}[The first Grigorchuk group]
    The \emph{first Grigorchuk group} $\mathcal{G}\le \mathrm{Aut}~T_2$, is a group acting faithfully on the binary rooted tree introduced by Grigorchuk in \cite{GrigorchukExample}. It is generated by  $a,b,c,d\in \mathrm{Aut}~T_2$, where $a$ is the finitary automorphism acting as $\sigma:=(1\, 2)\in \mathrm{Sym}(2)$ on the first level of $T_2$ and with trivial sections everywhere else, and $b,c,d$ are defined recursively as
$$b=(a,c),\quad c=(a,d)\quad\text{and}\quad d=(1,b).$$
It is known to be an infinite branch torsion $2$-group \cite{GrigorchukExample}, 
and to have both the congruence subgroup property \cite{BG} and the subgroup induction property \cite{GW} (so in particular $\mathcal{G}$ is LERF). Thus, the first Grigorchuk group $\mathcal{G}$ satisfies the assumptions in \cref{theorem: main result for micro supported actions}, so it contains a continuum of pairwise distinct ERF subgroups which are non-closed in the profinite topology (by the congruence subgroup property).
\end{example}

\begin{example}[GGS-groups]
    The \emph{Grigorchuk-Gupta-Sidki groups}, \emph{GGS-groups} for short, are groups acting faithfully on the $p$-adic tree for an odd prime $p$ defined as a common generalization of the second Grigorchuk group \cite{GrigorchukExample} and the Gupta-Sidki $p$-groups \cite{GuptaSidki}. Let $e:=(e_1,\dotsc,e_{p-1})\in \mathbb{F}_p^{p-1}$. The GGS-group $G_e$ corresponding to the \emph{defining vector} $e$ is the group $G_e\le \mathrm{Aut}~T$ generated by $a$ and $b$, where  $a$ is again the finitary automorphism acting as $\sigma:=(1\,\dotsb\, p)\in \mathrm{Sym}(p)$ on the first level and with trivial sections everywhere else, and $b$ is the directed automorphism defined recursively as
$$b=(a^{e_1},\dotsc,a^{e_{p-1}},b).$$

It is known that a GGS-group is torsion if and only if $e_1+\dotsb+e_{p-1}\equiv 0$ mod$~p$ \cite[Theorem 1]{torsionGGS}, and it has the subgroup induction property if and only if it is torsion \cite[Theorems A and D]{FL}. Furthermore, every GGS-group with a non-constant defining vector is branch \cite[Lemma 3.3]{GGSGustavo}. Actually, for a GGS-group it is equivalent to be branch, to have the congruence subgroup property, and to have a non-constant defining vector; see \cite[Theorems~A and~C]{FGU} and \cite[Lemma 3.3]{GGSGustavo}. Every GGS-group with a non-constant defining vector satisfies  the assumption in \cref{Theorem: uncountably many subgroups} and it has the congruence subgroup property, so it also contains a continuum of pairwise distinct ERF subgroups which are non-closed in the profinite topology.
\end{example}

\begin{example}[Multi-GGS groups]
    The family of GGS-groups can be generalized to the family of the so-called \emph{multi-GGS groups}. A multi-GGS group $G$ is determined by~$r$ defining vectors $e_1,\dotsc,e_r\in \mathbb{F}_p^{p-1}$, where $e_{i}:=(e_{i,1},\dotsc,e_{i,p-1})$ for $1\le i\le r$.  Then $G$ is generated by the finitary automorphism $a$ defined as before and $b_1,\dotsc,b_r$ directed automorphisms given recursively by
$$b_1=(a^{e_{1,1}},\dotsc,a^{e_{1,p-1}},b_1),~\dotsc~,b_r=(a^{e_{r,1}},\dotsc,a^{e_{r,p-1}},b_r).$$

It was proved by Garrido and Uria-Albizuri that every non-constant multi-GGS group has the congruence subgroup property \cite[Theorem A]{GU}. Furthermore, if $r\ge 2$ the multi-GGS group is  branch. Moreover, if for some $1\le i\le p$ we have $e_{i,1}+\dotsb e_{i,p-1}\equiv 0$ mod$~p$ then the commutator subgroup $G'$ contains torsion and thus $G$ satisfies the assumption in \cref{Theorem: uncountably many subgroups}. If there exists another $1\le j\le p$ such that $e_{j,1}+\dotsb e_{j,p-1}\not\equiv 0$ mod$~p$ then the multi GGS-group is not torsion and hence, it does not have the subgroup induction property by \cite[Theorem~A]{FL}. Therefore, it is still open whether such a multi-GGS group is LERF or not, but it is not ERF by \cref{Theorem: weakly branch are not erf}.
\end{example}



\bibliographystyle{unsrt}

\end{document}